\newtheorem{thm}{Theorem}
\newtheorem{prop}[thm]{Proposition}
\newtheorem{remark}[thm]{Remark}
\newenvironment{rem}{\begin{remark}\normalfont}{\end{remark}}
\title[Muttalib--Borodin plane partitions and the hard edge of random matrix ensembles]{Muttalib--Borodin plane partitions and the hard edge of random matrix ensembles}
\author[D. Betea, A. Occelli]{Dan Betea\thanks{\href{mailto:dan.betea@gmail.com}{dan.betea@gmail.com}; partially supported by FWO Flanders project EOS 30889451.}\addressmark{1}, \and Alessandra Occelli\thanks{\href{mailto:alessandra.occelli@tecnico.ulisboa.pt}{alessandra.occelli@tecnico.ulisboa.pt}; partially supported by the HyLEF ERC starting grant 2016.} \addressmark{2}}
\address{\addressmark{1}Department of Mathematics, KU Leuven,  Leuven, Belgium. \\ \addressmark{2}Department of Mathematics, Instituto Superior T\'ecnico, Lisbon, Portugal.}
\abstract{We study probabilistic and combinatorial aspects of natural volume-and-trace weighted plane partitions and their continuous analogues. We prove asymptotic limit laws for the largest parts of these ensembles in terms of new and known hard- and soft-edge distributions of random matrix theory. As a corollary we obtain an asymptotic transition between Gumbel and Tracy--Widom GUE fluctuations for the largest part of such plane partitions, with the continuous Bessel kernel providing the interpolation. We interpret our results in terms of two natural models of directed last passage percolation (LPP): a discrete $(\max, +)$ infinite-geometry model with rapidly decaying geometric weights, and a continuous $(\min, \cdot)$ model with power weights.}
\def\Z{\mathbb{Z}}
\def\N{\mathbb{N}}
\def\R{\mathbb{R}}
\def\P{\mathbb{P}}
\def\Id{\mathbbm{1}}
\def\Li{\mathrm{Li}_2}
\begin{document}

\maketitle

\section{Introduction}

\paragraph{Background.} Muttalib--Borodin (MB for short) ensembles are probability measures on $n$ real points $0 < x_1 < \dots < x_n$ of the from
\begin{equation} \label{eq:mb_def}
    \P(x_1 \in d x_1, \dots, x_n \in d x_n) = Z^{-1} \prod_{1 \leq i<j \leq n} (x_j-x_i)(x_j^\theta-x_i^\theta) \prod_{i=1}^n e^{-V(x_i)}
\end{equation}
where $\theta > 0$, $V$ is a potential and $Z$ is the normalization constant (partition function); they were introduced by Muttalib~\cite{mut95} as generalizations (if $\theta \ne 1$) of random matrix ensembles useful for studying disordered conductors. They are \emph{determinantal bi-orthogonal ensembles} with explicit correlation functions at least when $V$ is nice. Borodin explicitly computed a few examples~\cite{bor99} and further studied their asymptotic behavior at the ``edge'', i.e.~the behavior of $x_1$ as $n \to \infty$.

\paragraph{Main contribution.} In this paper we provide a combined algebraic-combinatorial and probabilistic perspective on such ensembles. We consider \emph{volume-and-trace dependent} simple distributions on plane partitions which give rise to discrete MB ensembles\footnote{Technically, these MB ensembles were first introduced in~\cite{fr05}.}---see Prop.~\ref{prop:disc_mb_dist}, and we interpret their largest parts/edge as certain last passage times in an infinite quadrant of rapidly decaying geometric random variables. The asymptotic behavior of these largest parts/LPP times has been previously encountered at the hard- and soft-edge of random matrix ensembles---see Thm.~\ref{thm:disc_lpp_he} and Thm.~\ref{thm:disc_lpp_tw}. In the simplest of such cases, for these last passage times and for the largest part of said plane partitions, we see a transition between the Gumbel distribution and the Tracy--Widom GUE distribution~\cite{tw94_airy} via the hard-edge random-matrix Bessel kernel~\cite{tw94_bessel}---see Remarks~\ref{rem:interpolation} and~\ref{rem:interpolation_2}. This result is similar to one of Johansson~\cite{joh08}. Furthermore, plane partitions give rise to a natural $q \to 1-$ limit ($q$ the volume parameter) where the discrete MB ensembles lead to a Jacobi-like continuous ensemble similar to Borodin's~\cite{bor99}. The smallest point in this ensemble has a natural $(\min, \cdot)$ LPP interpretation, and in studying its asymptotical distribution we recover a slight extension of Borodin's~\cite{bor99} probability distribution. See Theorems~\ref{thm:cont_lpp} and~\ref{thm:gap_prob_cont}. The latter has been shown by now to be universal for a wide range of potentials, see~\cite{mol20} and references therein. One of our main tool, principally specialized Schur processes~\cite{or03}, has been used on other occasions~\cite{fr05, bgs19} to bridge algebraic combinatorics and random matrix theory. Finally, let us note that Thm.~\ref{thm:disc_lpp_tw} is new, while a significantly expanded presentation of the other results will appear elsewhere~\cite{bo20}. 



\section{Main results}

\subsection{MB plane partitions and last passage percolation}

An \emph{$(M, N)$-based plane partition} $\Lambda$ is an array $\Lambda = (\Lambda_{i,j})_{1 \leq i \leq M, 1 \leq j \leq N}$ of non-negative integers satisfying $\Lambda_{i, j+1} \geq \Lambda_{i, j}$ and $\Lambda_{i+1, j} \geq \Lambda_{i, j}$ for all appropriate $i, j$. It can be viewed in 3D as a pile of cubes atop an $M \times N$ floor of a room (rectangle) where we place $\Lambda_{i, j}$ cubes above integer lattice point $(i, j)$ (starting from the ``back corner'' of the room). See Fig.~\ref{fig:pp_lpp_mb} (left) for an example. If $M=N=\infty$ we shall only speak of plane partitions, without a pre-qualifier (with the assumption that almost all $\Lambda_{i,j}=0$).  

Let us fix positive integer parameters $M \leq N$, possibly both equal to $\infty$. Fix also real parameters $0 \leq a, q \leq 1$ (not both 1) and $\eta, \theta \geq 0$. Denote $(Q, \tilde{Q}) = (q^\eta, q^\theta)$ for brevity. We consider the following distribution on $(M, N)$-based plane partitions:
\begin{equation} \label{eq:pp_measure}
    \P (\Lambda) = \frac{q^{\eta \text{ left vol}} \left[ a q^{\frac{\eta + \theta}{2}} \right]^{ \text{ central vol}} q^{\theta \text{ right vol}}}{Z} = \frac{Q^{\text{ left vol}} \left[ a \sqrt{Q \tilde{Q}} \right]^{ \text{ central vol}} \tilde{Q}^{\text{ right vol}}}{Z}
\end{equation}
where we call: \emph{central volume} (the word \emph{trace} is more customary in the literature, and it equals $\sum_i \Lambda_{i,i}$) the total number of cubes on the central slice of $\Lambda$ (marked in red in Fig.~\ref{fig:pp_lpp_mb} (left)); \emph{right volume} the number of cubes strictly to the right of the central slice (blue in fig.~cit.); and \emph{left volume} the number of cubes on the left (in green). Here $Z = \prod_{1 \leq i  \leq M} \prod_{1 \leq j \leq N} (1-aQ^{i-\frac{1}{2}} \tilde{Q}^{j-\frac{1}{2}})^{-1}$ is the partition (generating) function of all such plane partitions. 

We call this probability distribution the \emph{discrete Muttalib--Borodin} distribution on plane partitions---see below. If $a=1 = \eta = \theta$, it reduces to the usual $q^{\rm volume}$ distribution; if $q = 0$ it reduces to the $a^{\rm central\ vol} = a^{\rm trace}$ distribution (well-defined only for $M, N < \infty$).

\begin{figure} [!t]
    \begin{center}
        \includegraphics[scale=0.4]{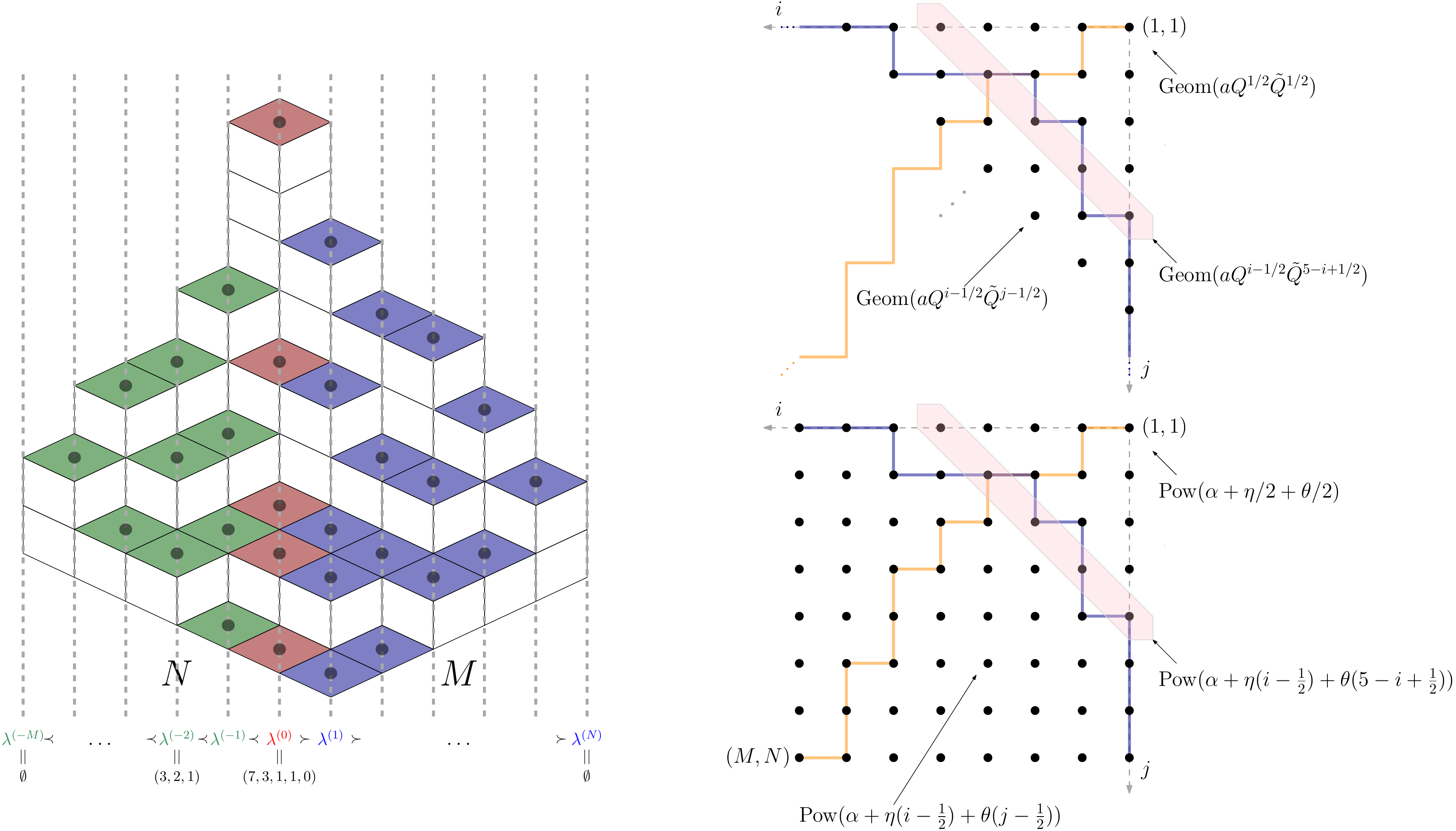}
    \end{center}
    \caption{
        Left: an $(M=5,N=6)$-based plane partition $\Lambda$ with color-coded left, center (red) and right cubes; the associated interlacing partitions are given at the bottom; the lozenges form a determinantal point process, each slice is a discrete MB ensemble, and slice $0$ contains $M$ points. Right top: the setting for the discrete geometric LPP we consider, and two polymers $\pi$ (orange) and $\varpi$ (blue) in~\eqref{eq:geo_lpp_def}; here $M=N=\infty$. Right bottom: the setting for the continuous power LPP we consider below with $M=N=8$.
    }
    \label{fig:pp_lpp_mb}
\end{figure}

Let us explain the naming for such distributions. Consider the standard identification of an $M \times N$-based plane partition $\Lambda$ with a sequence of ordinary interlacing partitions 
\begin{equation} \label{eq:disc_interlacing}
    \Lambda = (\emptyset = \lambda^{(0)} \prec \lambda^{(-M+1)} \prec \cdots \prec \lambda^{(0)} \succ \cdots \succ \lambda^{(N-1)} \succ \emptyset = \lambda^{(N)})
\end{equation}
(obtained by reading the heights of the horizontal lozenges on each vertical slice of Fig.~\ref{fig:pp_lpp_mb} (left)). We look at partition $\lambda^{(0)}$ and at its distribution. We choose this particular slice for simplicity only, looking at any other would yield similar formulas. Consider the point process $l^{(0)}$ with $M$ points given by $l^{(0)}_i = \lambda_i^{(0)} + M - i, 1 \leq i \leq M$ (the $M$ lozenges on the central slice of $\Lambda$ up to shift). We then the following proposition. 
Compare with~\eqref{eq:mb_def} and~\cite{fr05}.

\begin{prop} \label{prop:disc_mb_dist}
    Under the measure~\eqref{eq:pp_measure} for $M \leq N < \infty$, the $M$-point ensemble (slice) $l^{(0)}$ of $\Lambda$ has the following discrete Muttalib--Borodin distribution:
    \begin{equation} \label{eq:mbe_disc}
        \P(l^{(0)} = l) \propto \prod_{1 \leq i < j  \leq M} (Q^{l_j}-Q^{l_i}) (\tilde{Q}^{l_j} - \tilde{Q}^{l_i}) \prod_{1 \leq i \leq M} \left[a \sqrt{Q \tilde{Q}}\right]^{l_i} (\tilde{Q}^{l_i+1}; \tilde{Q})_{N-M}
    \end{equation}
    with $(x; u)_n = \prod_{0 \leq i < n} (1-xu^i)$ the $u$-Pochhammer symbol.
\end{prop}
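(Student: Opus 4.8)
The plan is to recognize the measure~\eqref{eq:pp_measure} as a Schur process in the sense of Okounkov--Reshetikhin~\cite{or03} and then extract the marginal law of the single slice $\lambda^{(0)}$. Using the encoding~\eqref{eq:disc_interlacing} and the elementary identity $s_{\mu/\nu}(x)=x^{|\mu|-|\nu|}$ for a one-variable skew Schur function — valid precisely when $\nu\prec\mu$, and $0$ otherwise — I would attach to the $t$-th ascending edge of~\eqref{eq:disc_interlacing} ($t=1,\dots,M$, counted from the boundary $\emptyset=\lambda^{(-M)}$ towards the centre) the single variable $\sqrt a\,Q^{M-t+\frac12}$, and to the $s$-th descending edge ($s=1,\dots,N$) the single variable $\sqrt a\,\tilde Q^{\,s-\frac12}$. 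A telescoping computation of the resulting exponents shows that the Schur-process weight of $\Lambda$ equals $Q^{\text{left vol}}\,[a\sqrt{Q\tilde Q}]^{\text{central vol}}\,\tilde Q^{\text{right vol}}$, and the Cauchy identity $\sum_\lambda s_\lambda(X)s_\lambda(Y)=\prod_{i,j}(1-x_iy_j)^{-1}$ identifies the total mass with $\prod_{i=1}^M\prod_{j=1}^N(1-aQ^{i-\frac12}\tilde Q^{j-\frac12})^{-1}=Z$; hence~\eqref{eq:pp_measure} is exactly this Schur process. The ordering of the edge variables is immaterial below, since only symmetric functions of them appear.

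Next I would marginalize onto $\lambda^{(0)}$. Collapsing the ascending part with the branching rule $\sum s_{\mu^{(1)}/\mu^{(0)}}(x_1)\cdots s_{\mu^{(M)}/\mu^{(M-1)}}(x_M)=s_\lambda(x_1,\dots,x_M)$ (the sum over chains $\emptyset=\mu^{(0)}\prec\cdots\prec\mu^{(M)}=\lambda$) and the descending part likewise gives, for each partition $\lambda$,
$$\P(\lambda^{(0)}=\lambda)=Z^{-1}\,s_\lambda\big(\sqrt a\,Q^{\frac12},\dots,\sqrt a\,Q^{M-\frac12}\big)\,s_\lambda\big(\sqrt a\,\tilde Q^{\frac12},\dots,\sqrt a\,\tilde Q^{\,N-\frac12}\big).$$
In particular $s_\lambda(X)=0$ unless $\lambda$ has at most $M$ parts, so $l^{(0)}$ genuinely is an $M$-point ensemble. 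It remains to evaluate the two principal specializations in terms of $l_i=\lambda_i+M-i$. By the bialternant formula $s_\lambda(x_1,\dots,x_m)=\det(x_i^{\lambda_j+m-j})_{i,j}\big/\det(x_i^{m-j})_{i,j}$ and the Vandermonde determinant, $s_\lambda(1,Q,\dots,Q^{M-1})$ equals $\prod_{1\le i<j\le M}(Q^{l_j}-Q^{l_i})$ times a $\lambda$-independent constant; homogeneity then gives $s_\lambda\big(\sqrt a\,Q^{\frac12},\dots,\sqrt a\,Q^{M-\frac12}\big)\propto(\sqrt a\,Q^{\frac12})^{|\lambda|}\prod_{i<j}(Q^{l_j}-Q^{l_i})$.

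The one step that requires genuine care is the $\tilde Q$-specialization, which involves $N\ge M$ variables. Writing $L_j=\lambda_j+N-j$ for $j=1,\dots,N$ — so $L_j=l_j+(N-M)$ for $j\le M$ while $L_j=N-j$ for $j>M$ — the bialternant gives $s_\lambda(1,\tilde Q,\dots,\tilde Q^{N-1})\propto\prod_{1\le i<j\le N}(\tilde Q^{L_j}-\tilde Q^{L_i})$ up to a $\lambda$-independent constant. I would split this product according to whether both indices are $\le M$, both are $>M$, or exactly one is $\le M$. The middle block is constant; factoring a common $\tilde Q^{\,N-M}$ out of the first block turns it into $\prod_{1\le i<j\le M}(\tilde Q^{l_j}-\tilde Q^{l_i})$ times a constant; and in the cross block, for each fixed $i\le M$ the product $\prod_{j=M+1}^{N}(\tilde Q^{\,N-j}-\tilde Q^{\,l_i+N-M})$ collapses, after pulling out the constant $\prod_j\tilde Q^{\,N-j}$, to $\prod_{k=1}^{N-M}(1-\tilde Q^{\,l_i+k})=(\tilde Q^{l_i+1};\tilde Q)_{N-M}$. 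Multiplying the two specializations, using $|\lambda|=\sum_i l_i+\mathrm{const}$ to write $(a\sqrt{Q\tilde Q})^{|\lambda|}\propto\prod_i[a\sqrt{Q\tilde Q}]^{l_i}$, and absorbing $Z$ together with all the $\lambda$-independent Vandermonde denominators into the proportionality constant yields~\eqref{eq:mbe_disc}. The main obstacle is therefore pure bookkeeping: decomposing the ``rectangular versus staircase'' part of the second Vandermonde so that the cross terms assemble into the $q$-Pochhammer factor $(\tilde Q^{l_i+1};\tilde Q)_{N-M}$. Everything else is a direct application of standard Schur-process and principal-specialization identities.
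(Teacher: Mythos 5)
Your proposal is correct and follows essentially the same route as the paper: identify the measure as a Schur process in single-variable specializations, marginalize to a Schur measure on $\lambda^{(0)}$, and evaluate both principal specializations via the bialternant/Vandermonde, splitting the length-$N$ Vandermonde into the length-$M$ block, a constant block, and cross terms that assemble into the $q$-Pochhammer $(\tilde{Q}^{l_i+1};\tilde{Q})_{N-M}$. The only difference is cosmetic: you work directly with the inhomogeneous variables $\sqrt a\,Q^{i-1/2}$ etc.\ and appeal to homogeneity, and you spell out the three-block Vandermonde bookkeeping that the paper compresses into the phrase ``can be rewritten as one of length $M$ plus additional univariate factors.''
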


Now we turn to introducing one of the last passage percolation models we consider. In the $M \times N$ integer rectangle (quadrant) consisting of points $(i, j)_{i, j \geq 1}$ with coordinates as in Fig.~\ref{fig:pp_lpp_mb} (top right) place at each point independent geometric random variables\footnote{$X$ is a \emph{geometric random variable} $X \sim {\rm Geom}(u)$ if $\P(X=k)=(1-u)u^k, \ k\in \N$.} $\omega^{\rm geo}_{i,j} \sim {\rm Geom}(a Q^{i-1/2} \tilde{Q}^{j-1/2})$. We look at the case $M=N=\infty$ but one could also consider these finite---see Fig.~\ref{fig:pp_lpp_mb} (top right). 

Let us look at the following \emph{last-passage times}:
\begin{equation} \label{eq:geo_lpp_def}
    L^{\rm geo}_1 = \max_{\pi} \sum_{(i,j) \in \pi} \omega^{\rm geo}_{i,j}, \quad L^{\rm geo}_2 = \max_{\varpi} \sum_{(i,j) \in \varpi} \omega^{\rm geo}_{i,j} 
\end{equation}
where $\pi$ is any down-left path from $(1,1)$ to $(M=\infty, N=\infty)$ (orange in Fig.~\ref{fig:pp_lpp_mb}) and $\varpi$ is any down-right path from $(M=\infty,1)$ to $(1, N=\infty)$ (blue in Fig.~\ref{fig:pp_lpp_mb}).
By Borel--Cantelli, only finitely many $\omega^{\rm geo}_{i,j}$'s are non-zero and so $L^{\rm geo}_i < \infty, i=1,2$ almost surely. Our first result is the following.

\begin{thm} \label{thm:disc_lpp_he}
    Let $M=N=\infty$. We have $L^{\rm geo}_1 = L^{\rm geo}_2 = \Lambda_{1,1}$ in distribution, where $\Lambda_{1,1}$ is the corner (largest) part of a Muttalib--Borodin-distributed plane partition $\Lambda$ as in~\eqref{eq:pp_measure}. Moreover, in the following $q,a \to 1-$ limit: 
    \begin{equation}
        q = e^{-\epsilon}, \quad a = e^{-\alpha \epsilon}, \quad \epsilon \to 0+ \quad (\alpha \geq 0 \text{ fixed})
    \end{equation}
    we have, for any $L \in \{ L^{\rm geo}_1, L^{\rm geo}_2, \Lambda_{1,1} \}$, that
    \begin{equation}
        \lim_{\epsilon \to 0+} \P \left( \epsilon L + \frac{\log (\epsilon \eta)}{\eta} + \frac{\log (\epsilon \theta)}{\theta} < s \right) = \det(1-\tilde{K}_{he})_{L^2(s, \infty)}
    \end{equation}
    where the RHS is a Fredholm determinant of the operator $\tilde{K}_{he} (x, y) = e^{-\frac{x}{2}-\frac{y}{2}} K_{he} (e^{-x}, e^{-y})$ and
    \begin{equation} \label{eq:he_kernel}
        K_{he} (x, y) = \frac{1}{\sqrt{xy}} \int\limits_{\delta + i \R} \frac{d \zeta}{2 \pi i} \int\limits_{-\delta + i \R} \frac{d \omega}{2 \pi i} \frac{F_{he}(\zeta)}{F_{he}(\omega)} \frac{x^\zeta}{y^\omega} \frac{1} {\zeta-\omega}, \quad F_{he}(\zeta) =\frac{\Gamma(\frac{\alpha} {2 \eta} - \frac{\zeta}{\eta} + \frac{1}{2})}{\Gamma(\frac{\alpha}{2 \theta} + \frac{\zeta}{\theta} + \frac{1}{2})}.
    \end{equation}
\end{thm}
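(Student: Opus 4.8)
The plan is to prove the three claims in turn: the identity in distribution $L^{\mathrm{geo}}_1 = L^{\mathrm{geo}}_2 = \Lambda_{1,1}$, then the hard-edge limit via a Fredholm determinant asymptotic analysis.

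First I would establish the combinatorial identity between the last passage times and the corner part. The key tool is the Robinson--Schensted--Knuth correspondence (in the form underlying the Schur process picture of~\eqref{eq:disc_interlacing}): with $M=N=\infty$ the plane partition $\Lambda$ under~\eqref{eq:pp_measure} corresponds, via RSK, to an infinite array of independent geometric weights, and the standard Greene-type characterization identifies $\Lambda_{1,1}$ with the maximal weight of a single monotone path. The subtlety is the two different path geometries (down-left $\pi$ versus down-right $\varpi$) and the infinite quadrant. I would handle this by noting that the weight $\omega^{\mathrm{geo}}_{i,j} \sim \mathrm{Geom}(aQ^{i-1/2}\tilde Q^{j-1/2})$ is symmetric enough: the $(\max,+)$ problem over both families of paths, after re-indexing rows or columns, feeds into the same Schur process with different (but equivalent in distribution) specializations, and in both cases the length of the first row of the RSK-image partition equals $\Lambda_{1,1} = \lambda^{(0)}_1$. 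Borel--Cantelli (already invoked) guarantees everything is finite a.s., so the identity of distributions is exact.

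Next I would pass to the $q,a\to 1-$ scaling. The starting point is the explicit correlation kernel for the discrete MB ensemble $l^{(0)}$, which follows from the principally specialized Schur process machinery of Okounkov--Reshetikhin~\cite{or03}: the point process $l^{(0)}$ is determinantal with a double-contour-integral kernel whose integrand is a ratio of the relevant specialization generating functions. Since $\P(\Lambda_{1,1} < k) = \P(l^{(0)}_1 < k + \text{shift})$ is a gap probability, it equals a Fredholm determinant $\det(1 - K)_{\ell^2(\{k, k+1, \dots\})}$ of that kernel. Under $q = e^{-\epsilon}$, $a = e^{-\alpha\epsilon}$, the Pochhammer/$q$-Gamma factors in the kernel converge to ordinary Gamma functions: $q$-Gamma $\to$ Gamma, and the product $(aQ^{i-1/2}\tilde Q^{j-1/2}; \dots)$-type factors exponentiate. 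This is where the function $F_{he}(\zeta)$ with its ratio of Gamma functions $\Gamma(\tfrac{\alpha}{2\eta} - \tfrac{\zeta}{\eta} + \tfrac12)/\Gamma(\tfrac{\alpha}{2\theta} + \tfrac{\zeta}{\theta} + \tfrac12)$ emerges, the two denominators $\eta,\theta$ tracking the left-volume and right-volume specializations $Q = q^\eta$, $\tilde Q = q^\theta$. The centering $\frac{\log(\epsilon\eta)}{\eta} + \frac{\log(\epsilon\theta)}{\theta}$ and the $\epsilon L$ rescaling are dictated by matching the Stirling asymptotics of those Gamma ratios; after the change of variables $x = e^{-\epsilon k}$ (so the discrete sum becomes a Riemann sum over $\mathbb{R}$ with the Jacobian producing the $e^{-x/2-y/2}$ conjugation factor turning $K_{he}$ into $\tilde K_{he}$), the discrete kernel converges pointwise to~\eqref{eq:he_kernel}.

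The main obstacle will be the last step: upgrading pointwise kernel convergence to convergence of the Fredholm determinant, which requires uniform tail bounds (trace-norm / Hadamard-type estimates) on the rescaled discrete kernel, uniformly in $\epsilon$ near $0$. Concretely one must deform the $\zeta,\omega$ contours to the vertical lines $\pm\delta + i\mathbb{R}$ and show the integrand decays fast enough — this uses the reflection/ratio structure of the $q$-Gamma factors and the fact that $\mathrm{Re}\,F_{he}$ along those lines gives Gaussian-type decay from $|\Gamma|$ on vertical lines — so that dominated convergence applies on the level of the series expansion $\sum_n \frac{(-1)^n}{n!}\int \det[K(x_i,x_j)]$. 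I would also need to check that the deformation picks up no poles (the $\tfrac12$-shifts in the Gamma arguments are exactly what keeps the poles off the imaginary axis for the allowed range of $\alpha,\eta,\theta$, and that $\delta$ can be chosen in a valid strip $0 < \delta < \delta_0(\alpha,\eta,\theta)$). Everything else — the explicit form of the Schur-process kernel, the $q\to1$ limits of $q$-special functions, the Riemann-sum convergence — is routine once the uniform bounds are in place.
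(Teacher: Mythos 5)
Your proposal follows the paper's overall strategy, and the asymptotic half coincides with it essentially step by step: Schur measure determinantal structure for $\lambda^{(0)}$, gap probability as a Fredholm determinant of a double-contour kernel, $q$-Pochhammer-to-Gamma asymptotics under $q=e^{-\epsilon}$, $a=e^{-\alpha\epsilon}$ with the scaling $(z,w)=(e^{\epsilon\zeta},e^{\epsilon\omega})$ producing $F_{he}$, the (Jacobian) origin of the $e^{-x/2-y/2}$ conjugation turning $K_{he}$ into $\tilde K_{he}$, and the recognition that the hard part is upgrading pointwise kernel convergence to trace-norm/Fredholm convergence via contour placement and decay estimates on vertical lines. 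That is exactly what the paper does (modulo the details it omits).

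For the first claim, $L^{\rm geo}_1 = L^{\rm geo}_2 = \Lambda_{1,1}$ in distribution, you take a genuinely different route from the paper. You argue by re-indexing: reflect one coordinate to turn down-right paths into down-left paths, then observe that the resulting product-form geometric specialization is a permutation of the original, so by the symmetry of Schur polynomials the associated Schur measure (and hence $\lambda^{(0)}_1$) is unchanged. This works once you first truncate to finite $M,N$ (one cannot literally reflect an infinite index; the Borel--Cantelli reduction handles this). The paper instead applies two distinct insertion algorithms to the \emph{same, unreflected} array: row-insertion RSK, for which the Greene theorem identifies $\Lambda_{1,1}$ with the down-left LPP time $L^{\rm geo}_1$, and column-insertion RSK (Burge's algorithm), for which the Krattenthaler form of Greene's theorem identifies $\Lambda_{1,1}$ with the down-right LPP time $L^{\rm geo}_2$; both insertions push forward the product-geometric input to the same MB measure on $\Lambda$, so $L^{\rm geo}_1 \stackrel{d}{=} \Lambda_{1,1} \stackrel{d}{=} L^{\rm geo}_2$. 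Your reflection argument is valid but slightly more roundabout and its present phrasing (``different but equivalent in distribution specializations'') is imprecise — the specializations are the same multiset, which is why the Schur measure agrees. The Burge route yields the identity directly on each finite window without invoking symmetry of the specialization; if you keep your approach, you should spell out the reflection $i\mapsto M+1-i$ (or $j\mapsto N+1-j$) and the finite-window truncation explicitly.
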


\begin{rem}
    Let us make a definition and a few remarks on the above:
    \begin{itemize}
        \item The asymptotic distribution above, and most below, are \emph{Fredholm determinants}. To define them, recall first that an operator $K$ with kernel $K(x,y)$ acts on $L^2(X)$ ($X$ is an open interval all of our cases), e.g.~on functions $f \in L^2(X)$, via ``matrix multiplication'' $(Kf) (x) = \int_X K(x, y) f(y) dy$. If such an operator is trace-class---see e.g.~\cite{rom15}, the \emph{Fredholm determinant} of $1-K$ ($1$ the identity operator) on $L^2 (X)$ is
        defined by
        \begin{equation}
            \det(1-K)_{L^2(X)} = 1 + \sum_{m \geq 1} \frac{(-1)^m}{m!} \int_X \cdots \int_X \det_{1 \leq i, j \leq m} [K(x_i, x_j)] d x_1 \cdots d x_m
        \end{equation}
        where we put $m$ integrals in the $m$-th summand.
        \item Writing $\epsilon = 1/R$ for the limit result, $L$ has order $O(R \log R)$ and $O(R)$ fluctuations asymptotically. Contrast this with the $R^{1/3}$ limits of Johansson~\cite{joh00} and compare with similar (exponential) results of~\cite[Thm.~1.1a]{joh08}.
        \item The distributional equality $L^{\rm geo}_1 = L^{\rm geo}_2$ is not immediately obvious. Even if $\eta = \theta = 1$ (anti-diagonals $i+j=k+1$ are equi-distributed ${\rm Geom}(aq^k)$ random variables), $L_1$ is a maximum over random variables all of which are $\sum_{k\geq 1}{\rm Geom}(a q^k)$ (the distribution for this sum is furthermore explicit); $L_2$ does not enjoy this property.
        \item $K_{he}$ has the following hypergeometric-like form:
        \begin{equation}
            K_{ke}(x, y) =  \sum_{i,j=0}^{\infty} 
              \frac{(-1)^{i+j}  x^{\tfrac{\alpha-1}{2} + \eta (i + \tfrac12)} y^{\tfrac{\alpha-1}{2} + \theta (j + \tfrac12)} / [\alpha + \eta(i + \tfrac12) + \theta(j + \tfrac12)]  }{i! j! \Gamma \left( \tfrac{\alpha}{\theta} + (i + \tfrac12) \tfrac{\eta}{\theta} + \tfrac12 \right) \Gamma \left(\frac{\alpha}{\eta} + \tfrac12 + (j + \tfrac12) \tfrac{\theta}{\eta} \right) }.
        \end{equation}
    \end{itemize}
\end{rem}



\begin{rem} \label{rem:interpolation}
    Consider (again) the \emph{equi-distributed-by-diagonal} case $\eta = \theta = 1$ (i.e.~anti-diagonal $i+j=k+1$ has $k$ iid ${\rm Geom} (a q^k)$ random variables on it). We have
    \begin{equation}
        \begin{split}
        \tilde{K}_{he} (x, y) &= e^{-\frac{x}{2}-\frac{y}{2}} K_{\alpha, \rm Bessel} (e^{-x}, e^{-y}) \\
                              &= \int\limits_{-\delta+i\R} \frac{d \omega}{2 \pi i} \int\limits_{\delta+i\R} \frac{d \zeta}{2 \pi i}  \frac{\Gamma(\tfrac{\alpha}{2} + \tfrac12 - \zeta)}{\Gamma(\tfrac{\alpha}{2} + \tfrac12 + \zeta)} \frac{\Gamma(\tfrac{\alpha}{2} + \tfrac12 + \omega)} {\Gamma(\tfrac{\alpha}{2} + \tfrac12 - \omega)} \frac{e^{-x\zeta}}{e^{-y\omega}}\frac{1}{\zeta-\omega} 
        \end{split}
    \end{equation}
    ($0 < \delta < \tfrac12$) with $K_{\alpha, \rm Bessel}$ the random matrix hard-edge continuous Bessel kernel~\cite{tw94_bessel}
    \begin{equation} \label{eq:Bessel_kernel}
        K_{\alpha, \rm Bessel} (x, y) = \int_{0}^1 J_{\alpha} (2 \sqrt{ux}) J_{\alpha} (2 \sqrt{uy}) du = \frac{J_{\alpha} (\sqrt{x}) \sqrt{y} J'_{\alpha} (\sqrt{y}) - \sqrt{x} J'_{\alpha} (\sqrt{x}) J_{\alpha} (\sqrt{y}) } {2(x-y)}
    \end{equation} 
    with the $J$'s Bessel functions. Let us write $F_{\alpha}(s) = \det(1-\tilde{K}_{he})_{L^2(s, \infty)}$. We then have, from Johansson~\cite{joh08}, the following Gumbel to Tracy--Widom interpolation property:
    \begin{itemize}
        \item $\lim_{\alpha \to 0} F_{\alpha}(s) = F_0(s) = e^{-e^{-s}}$ with the latter the Gumbel distribution;
        \item $\lim_{\alpha \to \infty} F_{\alpha}(-2 \log(2(\alpha-1)) + (\alpha-1)^{-2/3} s) = F_{\rm TW}(s)$ with the latter the Tracy--Widom GUE distribution~\cite{tw94_airy}.
    \end{itemize}
\end{rem}

Neither the Gumbel nor Tracy--Widom distributions appearing above are surprising. Indeed the first is the asymptotic distribution of the largest part of a $q^{\rm volume}$-distributed plane partition~\cite[Thm.~1]{vy06} (our case with $a=1$). To see Tracy--Widom GUE fluctuations directly, consider the result below. What is remarkable nonetheless is the interpolation/transition property of the continuous Bessel kernel in ``exponential'' variables between Gumbel (``universal'' asymptotic maximum of iid random variables) and Tracy--Widom GUE (asymptotic maximum of correlated systems like eigenvalues of Hermitian random matrices).

\begin{thm} \label{thm:disc_lpp_tw}
    Let $M=N=\infty$, and let $0 < a < 1$ be fixed. In the following $q =e^{-\epsilon} \to 1-$ as $\epsilon \to 0+$ limit and for any $L \in \{ L^{\rm geo}_1, L^{\rm geo}_2, \Lambda_{1,1} \}$ as in Thm.~\ref{thm:disc_lpp_he}, we have:
    \begin{equation}
        \lim_{\epsilon \to 0+} \P \left( \frac{L - c_1 \epsilon^{-1}} {c_2 \epsilon^{-1/3}} < s \right) = F_{\rm TW} (s)
    \end{equation}
    where $c_1, c_2 \in \R_+$ are explicit\footnote{\label{ft:asymptorics} Let $b=\sqrt{a}, z_c = \tfrac{b(\theta-\eta)+\sqrt{4\eta\theta+b^2(\theta-\eta)^2}}{2 \theta}, v_c = -\eta^{-1} \log(1-b z_c)-\theta^{-1} \log(1-b/z_c)$ and $S(z, v) = \eta^{-1} \Li(bz) - \theta^{-1} \Li(b/z) - v \log(z)$. We have $c_1 = y_c$ and $c_2 = \left( 2^{-1}(z \partial_z)^3 S |_{z=z_c,v=v_c} \right)^{1/3}$. Here $\Li$ is the dilogarithm function \url{https://fr.wikipedia.org/wiki/Dilogarithme}.} and with $F_{\rm TW}$ the Tracy--Widom GUE distribution.  
\end{thm}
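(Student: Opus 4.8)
\emph{Proof strategy.} By Theorem~\ref{thm:disc_lpp_he} it suffices to treat $L=\Lambda_{1,1}$, the largest part of the central partition $\lambda^{(0)}$ of a plane partition drawn from~\eqref{eq:pp_measure} (here $\eta,\theta>0$). The plan is to recognize~\eqref{eq:pp_measure} as a principally specialized Schur process, to express $\P(\Lambda_{1,1}<k)$ as a Fredholm determinant of a double--contour kernel, and then to carry out a steepest--descent analysis as $q=e^{-\epsilon}\to1-$.

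First I would make the determinantal structure explicit. Reading~\eqref{eq:pp_measure} as a Schur process, the central slice $\lambda^{(0)}$ carries the Schur measure $\P(\lambda^{(0)}=\lambda)\propto s_\lambda(\rho^+)\,s_\lambda(\rho^-)$, with $\rho^\pm$ two geometric (principal) specializations --- one a progression in powers of $Q$, one in powers of $\tilde Q$, each with prefactor tending to $b=\sqrt a$ as $\epsilon\to0$ --- normalized so that the Cauchy identity reproduces $\sum_\lambda s_\lambda(\rho^+)s_\lambda(\rho^-)=\prod_{i,j}(1-aQ^{i-1/2}\tilde Q^{j-1/2})^{-1}=Z$. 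Hence $\{\lambda^{(0)}_i-i+\tfrac12\}_{i\ge1}\subset\Z+\tfrac12$ is a determinantal point process with Okounkov's kernel, and, since $\Lambda_{1,1}=\lambda^{(0)}_1$,
\begin{equation}
  \P(\Lambda_{1,1}<k)=\det\bigl(1-K\bigr)_{\ell^2(\{k-\tfrac12,\,k+\tfrac12,\,\dots\})},\qquad
  K(u,v)=\frac{1}{(2\pi i)^2}\oint\!\!\oint\frac{\Phi(z)}{\Phi(w)}\,\frac{w^{\,v}}{z^{\,u+1}}\,\frac{dz\,dw}{z-w},
\end{equation}
with $\Phi(z)=H(\rho^+;z)/H(\rho^-;1/z)$, the standard $\sqrt{w/z}$ normalization suppressed, and the $z$--contour taken outside the $w$--contour, both inside the annulus of analyticity of $\Phi$.

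Next I would pass to $q=e^{-\epsilon}\to1-$ with $k=c_1\epsilon^{-1}+c_2\epsilon^{-1/3}s$. Expanding the defining products and using $\tfrac1{1-q^{\kappa m}}=\tfrac1{\kappa m\epsilon}+O(1)$ (for $\kappa=\eta,\theta$) one gets, uniformly on compact subsets of $\{|bz|<1,\ |b/z|<1\}$,
\begin{equation}
  \log H(\rho^+;z)=\frac{1}{\eta\,\epsilon}\,\Li(bz)+O(1),\qquad \log H(\rho^-;1/z)=\frac{1}{\theta\,\epsilon}\,\Li(b/z)+O(1),
\end{equation}
so that the full phase of the integrand in $K(u,v)$ equals $\epsilon^{-1}\bigl(S(z,c_1)-S(w,c_1)\bigr)+c_2\epsilon^{-1/3}s\,(\log w-\log z)+O(1)$, with $S(z,v)=\eta^{-1}\Li(bz)-\theta^{-1}\Li(b/z)-v\log z$ as in footnote~\ref{ft:asymptorics}. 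A direct computation gives $z\partial_z S=-\eta^{-1}\log(1-bz)-\theta^{-1}\log(1-b/z)-v$ and $(z\partial_z)^2 S=\eta^{-1}\tfrac{bz}{1-bz}-\theta^{-1}\tfrac{b/z}{1-b/z}$; choosing $v=c_1$ makes $z\partial_z S(z_c,c_1)=0$ (this is exactly the footnote's defining relation for $v_c$), the equation $(z\partial_z)^2 S=0$ reduces to the quadratic $\theta z^2+b(\eta-\theta)z-\eta=0$ whose relevant root is $z_c\in(b,1/b)$, and $(z\partial_z)^3 S(z_c,c_1)=\eta^{-1}\tfrac{bz_c}{(1-bz_c)^2}+\theta^{-1}\tfrac{b/z_c}{(1-b/z_c)^2}>0$, so $z_c$ is a double critical point of $S(\cdot,c_1)$ with $c_2=\bigl(\tfrac12(z\partial_z)^3 S|_{z=z_c,\,v=c_1}\bigr)^{1/3}$. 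Then I would deform $z$ and $w$ to steepest--descent/ascent contours crossing at $z_c$ at the angles $\pm\pi/3$, rescale $z=z_c\bigl(1+\epsilon^{1/3}(z_c c_2)^{-1}\zeta\bigr)$ (and analogously $w$), so that the local phase becomes $\tfrac13\zeta^3-s\zeta$ up to the usual gauge conjugation, with the contributions away from $z_c$ exponentially negligible and the lattice sum over $\{k-\tfrac12,k+\tfrac12,\dots\}$ turning into a Riemann integral on scale $\epsilon^{1/3}$. This gives convergence of the rescaled kernel to the Airy kernel shifted by $s$, whence, after trace--class tail bounds, $\P\bigl((L-c_1\epsilon^{-1})/(c_2\epsilon^{-1/3})<s\bigr)\to\det(1-K_{\mathrm{Ai}})_{L^2(s,\infty)}=F_{\mathrm{TW}}(s)$.

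The hard part will not be the algebra producing $z_c$, $c_1$ ($=v_c$) and $c_2$ --- that is just the first three logarithmic derivatives of $S$ --- but the \emph{global} steepest--descent geometry: one must show that at $v=c_1$ the level set $\{\Re S(z,c_1)=\Re S(z_c,c_1)\}$ issues from $z_c$ along the directions $\pm\pi/3$ and can be completed into a closed $z$--contour on which $\Re S$ attains its maximum only at $z_c$, and dually a $w$--contour realizing the minimum, both avoiding the dilogarithm branch cuts $[1/b,\infty)$ and $(0,b]$ and keeping $|w|<|z|$; and one must verify that the $q\to1-$ estimates above are uniform \emph{along these contours}, including near the endpoints $z=1/b$ and $z=b$. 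Those uniformity and topography estimates, together with the trace--class bounds needed to pass pointwise kernel convergence through the Fredholm determinant, are where essentially all the work lies.
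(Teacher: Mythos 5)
Your proposal follows essentially the same route as the paper's sketch: express the threshold gap probability as a Fredholm determinant of the Okounkov double‑contour Schur‑measure kernel (the paper's $F_d$, your $\Phi$, coincide), estimate the integrand with the dilogarithm asymptotics of the $q$‑Pochhammer factors to obtain the phase $S(z,v)=\eta^{-1}\Li(bz)-\theta^{-1}\Li(b/z)-v\log z$, tune $v=v_c$ so that $z_c$ is a double critical point, rescale on the $R^{1/3}$ scale, and let the cubic term produce the Airy kernel and hence $F_{\mathrm{TW}}$. Your verification of the quadratic $\theta z^2+b(\eta-\theta)z-\eta=0$ for $z_c$, that $z_c\in(b,1/b)$ (positivity of $(z\partial_z)^3S$), and the identification $c_2^3=\tfrac12(z\partial_z)^3S|_{z_c}$ all agree with footnote~\ref{ft:asymptorics}; and your closing caveats about the global steepest‑descent topography, uniformity along the contours near $z=b,1/b$, and trace‑class bounds are exactly the omitted technical details the paper defers as well.
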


\begin{rem} \label{rem:interpolation_2}
    Let us summarize the three asymptotic regimes when $\eta = \theta = 1$. As $q \to 1-$, $L$ as in Theorems~\ref{thm:disc_lpp_he} and~\ref{thm:disc_lpp_tw} has: 
    \begin{itemize}
        \item Gumbel fluctuations, if $a=1$;
        \item Tracy--Widom fluctuations (on a different scale), if $0 < a < 1$ fixed;
        \item transitional (exponential) hard-edge Bessel fluctuations, if $a \to 1$ critically.
    \end{itemize} 
\end{rem}

\subsection{Continuous MB ensembles and $(\min, \cdot)$ last passage percolation}

In this section we use continuous parameters $\alpha, \eta, \theta \geq 0$ (not all zero) and integer parameters $M \leq N < \infty$ (same as above, except now we keep them finite at the beginning). 

On the lattice $(i, j)_{1 \leq i \leq M, 1 \leq j \leq N}$ place, at $(i, j)$, independent power random variables\footnote{$Y$ is a \emph{power random variable} $Y \sim {\rm Pow}(\beta)$ if $\P (Y \in dx) = \beta x^{\beta-1}, \ x \in [0,1]$, for $\beta > 0$.} $\omega^{\rm pow}_{i,j} \sim {\rm Pow} (\alpha + \eta(i - \tfrac12) + \theta (j-\tfrac12))$. Let
\begin{equation} \label{eq:pow_lpp_def}
    L^{\rm pow}_1 = \min_{\pi} \prod_{(i,j) \in \pi} \omega^{\rm pow}_{i,j}, \quad L^{\rm pow}_2 = \min_{\varpi} \prod_{(i,j) \in \varpi} \omega^{\rm pow}_{i,j} 
\end{equation}
where $\pi$ is any down-left path from $(1,1)$ to $(M, N)$ (orange in Fig.~\ref{fig:pp_lpp_mb} (bottom right)) and $\varpi$ is any down-right path from $(M,1)$ to $(1, N)$ (blue in fig.~cit.).


We have the following finite $M,N$ result. Note again the first equality in distribution is not immediately obvious. Part of it was anticipated, up to change of variables, in~\cite{fr05}.

\begin{thm}\label{thm:cont_lpp}
    We have $L^{\rm pow}_1 = L^{\rm pow}_2 = x_1$ in distribution, with $x_1$ the smallest (hard-edge\footnote{The name hard-edge stands for the fact that 0 is a ``hard edge'' of the support of the distribution; no number can go below 0.}) point in the following Muttalib--Borodin distribution on $M$-point ensembles $\vec{x} = (0 < x_1 < \dots < x_M < 1)$:
    \begin{equation} \label{eq:mbe_cont}
        \P(\vec{x} \in d \vec{x})  \propto \prod_{1 \leq i < j  \leq M} (x_j^\eta-x_i^\eta) (x_j^\theta-x_i^\theta) \prod_{1 \leq i \leq M} x_i^{\alpha + \frac{\eta + \theta}{2} - 1} (1-x_i^\theta)^{N-M}.
    \end{equation}
\end{thm}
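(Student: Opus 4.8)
The plan is to deduce the theorem from a finite-$(M,N)$ analogue of Theorem~\ref{thm:disc_lpp_he} by the $q,a\to1-$ scaling limit $q=e^{-\epsilon}$, $a=e^{-\alpha\epsilon}$, $\epsilon\to0+$, combined with the substitution $x_i=q^{l^{(0)}_i}$ on the ensemble side.

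First I would set up the discrete skeleton for finite $M\le N$. The weight~\eqref{eq:pp_measure} makes the interlacing sequence~\eqref{eq:disc_interlacing} a Schur process, and summing over its ascending, resp.\ descending, half with the appropriate $Q$-, resp.\ $\tilde Q$-weights identifies the marginal law of $\lambda^{(0)}$ with the Schur measure $\P(\lambda^{(0)}=\lambda)\propto a^{|\lambda|}\,s_\lambda(Q^{1/2},\dots,Q^{M-1/2})\,s_\lambda(\tilde Q^{1/2},\dots,\tilde Q^{N-1/2})$, whose evaluation is precisely~\eqref{eq:mbe_disc} (equivalently, Proposition~\ref{prop:disc_mb_dist}); in particular $\Lambda_{1,1}=\lambda^{(0)}_1$ has this law. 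By the RSK correspondence and Greene's theorem, the largest part of a Schur measure with product specializations equals, in distribution, the corresponding up-right geometric last-passage time; with the split $aQ^{i-1/2}\tilde Q^{j-1/2}=(aQ^{i-1/2})(\tilde Q^{j-1/2})$ this gives $\Lambda_{1,1}=L^{\rm geo}_1$ in distribution over the $M\times N$ rectangle. For the anti-directed family, the reflection $i\mapsto M+1-i$ carries $\varpi$-paths to $\pi$-paths and the geometric parameters to $(Q^{M+1/2-i})(a\tilde Q^{j-1/2})$, again of product form, so $L^{\rm geo}_2$ is the largest part of the Schur measure with specializations $s_\lambda(Q^{1/2},\dots,Q^{M-1/2})$ and $a^{|\lambda|}s_\lambda(\tilde Q^{1/2},\dots,\tilde Q^{N-1/2})$, which by homogeneity of Schur functions is the \emph{same} measure; hence $\Lambda_{1,1}=L^{\rm geo}_1=L^{\rm geo}_2$ in distribution.

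Then I would pass to the limit. On the ensemble side, substituting $x_i=q^{l^{(0)}_i}$ in~\eqref{eq:mbe_disc} turns $Q^{l_j}-Q^{l_i}$ and $\tilde Q^{l_j}-\tilde Q^{l_i}$ into $x_j^\eta-x_i^\eta$ and $x_j^\theta-x_i^\theta$, turns $[a\sqrt{Q\tilde Q}]^{l_i}$ into $x_i^{\alpha+(\eta+\theta)/2}$, and turns $(\tilde Q^{l_i+1};\tilde Q)_{N-M}=\prod_{k=1}^{N-M}(1-x_i^\theta q^{\theta k})$ into $(1-x_i^\theta)^{N-M}$; since the image lattice $q^{\Z_{\ge0}}$ has local spacing $\sim\epsilon\,x$ near $x\in(0,1]$, converting the discrete weight to a density on $\{0<x_1<\dots<x_M<1\}$ contributes a Jacobian $\prod_i(\epsilon\,x_i)^{-1}$, whose powers of $\epsilon$ cancel against the normalization while the factor $\prod_i x_i^{-1}$ lowers each exponent by one, reproducing exactly~\eqref{eq:mbe_cont}. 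On the LPP side, $aQ^{i-1/2}\tilde Q^{j-1/2}=e^{-\epsilon\beta_{i,j}}$ with $\beta_{i,j}=\alpha+\eta(i-\tfrac12)+\theta(j-\tfrac12)$, and $\epsilon\cdot{\rm Geom}(e^{-\epsilon\beta})\Rightarrow{\rm Exp}(\beta)$ jointly over the finitely many sites; writing $Y_{i,j}=e^{-E_{i,j}}$ with $E_{i,j}\sim{\rm Exp}(\beta_{i,j})$ one has $Y_{i,j}\sim{\rm Pow}(\beta_{i,j})$, so by the continuous mapping theorem $q^{L^{\rm geo}_k}=\min_\gamma\prod_{(i,j)\in\gamma}e^{-\epsilon\omega^{\rm geo}_{i,j}}\Rightarrow\min_\gamma\prod_{(i,j)\in\gamma}Y_{i,j}=L^{\rm pow}_k$ ($\gamma$ over $\pi$-paths when $k=1$ and over $\varpi$-paths when $k=2$), while $q^{\Lambda_{1,1}}=q^{1-M}\min_i q^{l^{(0)}_i}$ converges, as a continuous functional of the weakly convergent ensemble, to the smallest point $x_1$ of~\eqref{eq:mbe_cont}. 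Combining these limits with the finite-$\epsilon$ identity $\Lambda_{1,1}=L^{\rm geo}_1=L^{\rm geo}_2$ yields $x_1=L^{\rm pow}_1=L^{\rm pow}_2$ in distribution.

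The main obstacle will be the discrete identity $L^{\rm geo}_1=L^{\rm geo}_2$ (already flagged after Theorem~\ref{thm:disc_lpp_he} as ``not immediately obvious''): it becomes transparent only after reflecting the anti-directed family, checking that the product form of the geometric parameters survives, and invoking homogeneity of Schur functions, and one must carry the RSK conventions and the exact principal specializations with care. The scaling limit is otherwise routine, the one mildly technical point being the upgrade of pointwise convergence of densities to weak convergence, which follows from Scheff\'e's lemma once convergence of the normalizing constants is read off the explicit formulas.
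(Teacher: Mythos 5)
Your proof is correct and follows the same overall plan as the paper: establish a finite-$(M,N)$ discrete skeleton identifying $\Lambda_{1,1}=L^{\rm geo}_1=L^{\rm geo}_2$ in distribution via RSK/Greene, then take the $q=e^{-\epsilon}$, $a=e^{-\alpha\epsilon}$, $x_i=q^{l^{(0)}_i}$ limit on both the ensemble side (recovering~\eqref{eq:mbe_cont}, with the $x_i^{-1}$ Jacobian you correctly track) and the LPP side (${\rm Geom}\to{\rm Pow}$, $\max\sum\to\min\prod$). The one place you genuinely diverge is the finite identity $L^{\rm geo}_1=L^{\rm geo}_2$: the paper obtains it by applying \emph{two different} bijections to the same geometric array --- row-insertion RSK and column-insertion RSK (Burge) --- both of which transport the weights to the same plane-partition law, with Greene--Krattenthaler reading off $\Lambda_{1,1}$ as $L^{\rm geo}_1$ in the first case and $L^{\rm geo}_2$ in the second. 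You instead reflect the array via $i\mapsto M+1-i$ (so $\varpi$-paths become $\pi$-paths), observe that the product form of the geometric parameters survives the reflection, and conclude by symmetry/homogeneity of Schur polynomials that the two induced Schur measures coincide. Both are valid; your route is a bit more elementary in that it only uses row RSK and the symmetry of Schur functions (and in fact works for any product-form parameters $\alpha_i\beta_j$), while the paper's route via Burge is more structural and simultaneously delivers the identity $L^{\rm geo}_2=\Lambda_{1,1}$ for the full plane-partition ensemble, not just in distribution for the top slice. Minor quibbles that do not affect correctness: the phrase ``specializations $s_\lambda(\cdot)$ and $a^{|\lambda|}s_\lambda(\cdot)$'' conflates specializations with the resulting Schur-measure weights (the meaning is clear), and the paper writes the substitution as $x_i=q^{\lambda_i}$ rather than your $x_i=q^{l_i}$; the difference $q^{M-i}\to1$, so the limit is the same.
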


If $\eta = \theta = 1$, the above is an example of the Jacobi random matrix ensemble. 

Finally, our next result is asymptotic. We take $M,N \to \infty$, and we can even do this independently.

\begin{thm} \label{thm:gap_prob_cont}
    With $L \in \{L^{\rm pow}_1, L^{\rm pow}_2, x_1 \}$ as in Thm.~\ref{thm:cont_lpp} and $K_{he} (x, y)$ as in~\eqref{eq:he_kernel} we have:
    \begin{equation}
        \begin{split}
        \lim_{M, N \to \infty} \P \left( \frac{  L  } {M^{\frac{1}{\eta}} N^{\frac{1}{\theta}}} < r \right) = \det (1 - K_{he})_{L^2(0,r)}
        \end{split}.
    \end{equation} 
\end{thm}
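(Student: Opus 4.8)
\emph{Reduction to a gap probability.} By Theorem~\ref{thm:cont_lpp} the random variables $L^{\rm pow}_1$, $L^{\rm pow}_2$ and $x_1$ are equidistributed, so it suffices to analyze $x_1$, the smallest point of the ensemble~\eqref{eq:mbe_cont}. Up to normalization~\eqref{eq:mbe_cont} equals $\det[x_j^{\eta(i-1)}]_{i,j=1}^{M}\det[x_j^{\theta(i-1)}]_{i,j=1}^{M}\prod_j w(x_j)$ with weight $w(x)=x^{\alpha+\frac{\eta+\theta}{2}-1}(1-x^{\theta})^{N-M}$ on $(0,1)$; thus it is a determinantal (biorthogonal) ensemble with a correlation kernel $\mathcal K_{M,N}$ on $(0,1)$, and for every $t\in(0,1)$ one has the gap identity
\begin{equation*}
    \P\bigl(x_1>t\bigr)=\det\bigl(1-\mathcal K_{M,N}\bigr)_{L^2(0,t)}.
\end{equation*}
Hence, at the hard-edge scale $t=r\,M^{-1/\eta}N^{-1/\theta}$ and using Theorem~\ref{thm:cont_lpp}, the assertion is equivalent to the convergence
\begin{equation*}
    \lim_{M,N\to\infty}\det\bigl(1-\mathcal K_{M,N}\bigr)_{L^2(0,\,rM^{-1/\eta}N^{-1/\theta})}=\det\bigl(1-K_{he}\bigr)_{L^2(0,r)}.
\end{equation*}

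\emph{An explicit kernel.} The Gram matrix $\mathcal G_{kl}=\int_0^1 x^{\eta k+\theta l}w(x)\,dx$ is a Beta integral, so $\mathcal G_{kl}$, and with it $\mathcal K_{M,N}(x,y)=\sum_{k,l}x^{\eta k}[\mathcal G^{-1}]_{kl}\,y^{\theta l}w(y)$, has fully explicit $\Gamma$-function data. Either by inverting $\mathcal G$ via the standard Cramer/residue manipulation (as in Borodin~\cite{bor99}), or by realizing~\eqref{eq:mbe_cont} as the $q\to1-$ limit of the discrete ensemble~\eqref{eq:mbe_disc} under $q=e^{-\epsilon}$, $a=e^{-\alpha\epsilon}$, $x_i=q^{l_i}$ and passing to the limit in the Okounkov--Reshetikhin kernel of the principally specialized Schur process~\eqref{eq:pp_measure}~\cite{or03}, one obtains a double contour integral of the form
\begin{equation*}
    \mathcal K_{M,N}(x,y)=\frac{1}{(2\pi i)^2}\int_{\gamma_\zeta}d\zeta\int_{\gamma_w}dw\;\frac{F_{he}(\zeta)}{F_{he}(w)}\,G_{M,N}(\zeta,w)\,\frac{x^{\zeta+c}}{y^{w+c}}\,\frac{w(y)}{\zeta-w},
\end{equation*}
with $F_{he}$ exactly as in~\eqref{eq:he_kernel}, suitable vertical-type contours $\gamma_\zeta,\gamma_w$, a fixed constant $c$, and $G_{M,N}(\zeta,w)$ a ratio of $\Gamma$-functions whose arguments are shifted by $M$ (entering with $\zeta/\eta$, $w/\eta$) and by $N$ (entering with $\zeta/\theta$, $w/\theta$); these $\Gamma$'s carry the entire $M,N$-dependence.

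\emph{Hard-edge scaling and steepest descent.} Substitute $x=u\,M^{-1/\eta}N^{-1/\theta}$, $y=v\,M^{-1/\eta}N^{-1/\theta}$ and use conjugation invariance of the Fredholm determinant. The factor $x^{\zeta}/y^{w}$ then contributes an extra $M^{-(\zeta-w)/\eta}N^{-(\zeta-w)/\theta}$, while Stirling's formula gives $G_{M,N}(\zeta,w)\to M^{(\zeta-w)/\eta}N^{(\zeta-w)/\theta}$ uniformly for $\zeta,w$ on the (fixed) contours; the two powers cancel, and the rescaled kernel $\widehat{\mathcal K}_{M,N}(u,v):=M^{-1/\eta}N^{-1/\theta}\,\mathcal K_{M,N}\bigl(u\,M^{-1/\eta}N^{-1/\theta},\,v\,M^{-1/\eta}N^{-1/\theta}\bigr)$ converges to $K_{he}(u,v)$ pointwise (dominated convergence on the contours). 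This is the continuous, finite-$M,N$ counterpart of the discrete steepest descent already used for Theorem~\ref{thm:disc_lpp_he}, now with the extra $(1-x^{\theta})^{N-M}$ factor tracked through. To finish one upgrades pointwise convergence to convergence of the Fredholm determinants on $L^2(0,r)$: the $\Gamma$-ratios with Stirling furnish decay in the contour variables, the power $x^{\zeta+c}$ on a contour with $\Re\zeta>-c$ furnishes decay in $u,v$ near $0$, and the resulting $(M,N)$-uniform integrable bound on $\widehat{\mathcal K}_{M,N}$ over $(0,r)^2$ lets one pass to the limit termwise in the Fredholm expansion.

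\emph{Main obstacle.} The crux is this last uniform control while $M$ and $N$ tend to infinity \emph{independently}: deforming $\gamma_\zeta,\gamma_w$ to admissible contours along which Stirling applies with error uniform in both parameters, keeping the two scales $M^{1/\eta}$ and $N^{1/\theta}$ cleanly separated throughout (the $(1-x^{\theta})^{N-M}$ factor in~\eqref{eq:mbe_cont}, i.e.\ the $N$-shift inside $G_{M,N}$, is exactly what generates the $N^{1/\theta}$), and producing a dominating function strong enough for the Fredholm limit. By contrast, the reduction to a gap probability and the derivation of the pre-limit kernel are essentially bookkeeping once the biorthogonal/Schur-process structure is in place.
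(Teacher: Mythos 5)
Your proposal reproduces the paper's argument in all essentials: reduce via Theorem~\ref{thm:cont_lpp} to a gap probability for the determinantal hard-edge ensemble, obtain the finite-$(M,N)$ double-contour kernel $K_c$ (the paper writes it with the Pochhammer ratio $F_c(\zeta)=\bigl(\tfrac{\alpha}{2\theta}+\tfrac{\zeta}{\theta}+\tfrac12\bigr)_N / \bigl(\tfrac{\alpha}{2\eta}-\tfrac{\zeta}{\eta}+\tfrac12\bigr)_M$, which is precisely your $F_{he}\cdot G_{M,N}$ after splitting off the $M$- and $N$-shifted $\Gamma$'s), rescale by $M^{1/\eta}N^{1/\theta}$, and cancel the Stirling asymptotics of $G_{M,N}$ against the powers from $x^{\zeta}/y^{\omega}$ to land on $K_{he}$. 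The remaining issue you flag---uniform contour estimates and dominating bounds strong enough for Fredholm convergence as $M,N\to\infty$ independently---is exactly what the paper also leaves as ``further estimates.''
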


The terminology ``hard edge'' now becomes clear. We are looking at $L$ close to 0, the hard-edge of the support $[0,1]$ for the ensemble in~\eqref{eq:mbe_cont}. When $\eta = \theta = 1$, $K_{he}$ is the hard-edge Bessel kernel~\cite{tw94_bessel} (scaling of the Laguerre or Jacobi ensembles around 0). When $\eta = 1$, $K_{he}$ is Borodin's~\cite{bor99} generalization of the Bessel kernel, appearing in the scaling of various Muttalib--Borodin ensembles---see e.g.~\cite{mol20} and references therein.

\section{Sketches of proofs}

\begin{proof}[Proof of Prop.~\ref{prop:disc_mb_dist}]
Muttalib--Borodin-distributed plane partitions $\Lambda$, under the identification~\eqref{eq:disc_interlacing}, are Schur processes~\cite{or03}. In our case this means the measure~\eqref{eq:pp_measure} can be written as
\begin{equation} \label{eq:schur_proc}
    \P (\Lambda) = Z^{-1} \prod_{i=0}^{M-1} s_{\lambda^{(-i)} / \lambda^{(-i-1)}} (\sqrt{a} Q^{i+1/2}) \prod_{i=0}^{N-1} s_{\lambda^{(i)} / \lambda^{(i+1)}} (\sqrt{a} \tilde{Q}^{i+1/2})
\end{equation}
with the partition (generating) function $Z = \prod_{i=1}^M \prod_{j=1}^N (1-a Q^{i-1/2} \tilde{Q}^{j-1/2})^{-1}$; with $(Q, \tilde{Q}) = (q^\eta, q^\theta)$ as before; and with $s_{\lambda / \mu}$ the skew Schur polynomials (functions)~\cite[Ch.~I.5]{mac}. These latter, evaluated in one variable, contribute the right amount to the measure: $s_{\lambda / \mu} (x) = x^{|\lambda| - |\mu|} \Id_{\mu \prec \lambda}$ by observing ${\rm left\ vol} = \sum_{i=-M}^{-1} |\lambda^{(i)}|$, ${\rm central\ vol} = |\lambda^{(0)}|$ and ${\rm right\ vol} = \sum_{i=1}^{N} |\lambda^{(i)}|$.

As such, the marginal distribution of $\lambda^{(0)}$ is a Schur measure~\cite{oko01, or03}. We obtain, after some simplification:
\begin{equation}
    \P (\lambda^{(0)} = \lambda) = Z^{-1} a^{|\lambda|} Q^{|\lambda|/2} \tilde{Q}^{|\lambda|/2} s_{\lambda} (1, Q, \dots, Q^{M-1}) s_{\lambda} (1, \tilde{Q}, \dots, \tilde{Q}^{N-1})
\end{equation}
with $Z$ as before and with $s_{\lambda}$ the regular Schur polynomials ($s_\lambda = s_{\lambda/\emptyset}$). To finish, let us first notice that $\ell(\lambda) \leq M$ from the interlacing constraints~\eqref{eq:disc_interlacing}. Moreover, specializing Schur polynomials in a geometric progression (the \emph{principal specialization}) is explicit~\cite[Ch.~I.3]{mac}:
$s_{\lambda}(1, u, \dots, u^{n-1}) = \prod_{1 \leq i < j \leq n} \frac{u^{\lambda_i + M -i} - u^{\lambda_j + M - j}}{u^{M -i} - u^{M - j}}$. Recalling $l_i = \lambda_i + M - i$ and so $l_i = M-i$ for $i > M$, we see the length $N$ Vandermonde-like product in one of the terms above can be rewritten as one of length $M$ plus additional univariate factors as stated. Note we gauge away constants independent of the $l_i$'s.
\end{proof}

\begin{proof}[Proof of Thm.~\ref{thm:disc_lpp_he}]
There are two parts of the statement. For the finite part, consider the array of numbers $(\omega^{\rm geo}_{i, j})_{1 \leq i \leq M, 1 \leq j \leq N}$ as considered but first with $M \leq N$ both finite. We can transform this array, bijectively, into a plane partition $\Lambda$ via both row insertion Robinson--Schensted--Knuth (RSK)~\cite{knu70} and column insertion RSK (Burge)~\cite{bur74} algorithms. In both cases if we start with distribution $\omega^{\rm geo}_{i,j} \sim {\rm Geom}(a Q^{i-\frac{1}{2}} \tilde{Q}^{j-\frac{1}{2}})$, we end up with $\Lambda$ distributed as in~\eqref{eq:pp_measure}---see~\cite{bbbccv18} and references therein. Now the Greene--Krattenthaler~\cite{gre74, kra06} theorem states that $L^{\rm geo}_1 = \Lambda_{1,1}$ (for row RSK) and $L^{\rm geo}_2 = \Lambda_{1,1}$ (for column RSK). This implies that $L^{\rm geo}_1 = L^{\rm geo}_2 = \Lambda_{1,1}$ in distribution, for $M, N$ finite. For $M = N = \infty$ we just observe that almost surely only finitely many $\omega^{\rm geo}_{i,j}$ will be non-zero by Borel--Cantelli and the results just described go through.

For the second part, the previous proof implies that in distribution $L^{\rm geo}_1 = L^{\rm geo}_2 = \Lambda_{1,1} = \lambda_1$ where the last quantity is the first part of a random partition distributed as
\begin{equation}
    \P (\lambda) = Z^{-1} a^{|\lambda|} Q^{|\lambda|/2} \tilde{Q}^{|\lambda|/2} s_{\lambda} (1, Q, Q^2, \dots) s_{\lambda} (1, \tilde{Q}, \tilde{Q}^2,\dots )
\end{equation}
(both specializations are now infinite geometric series as $M=N=\infty$) with $Z = \prod_{i=1}^\infty \prod_{j=1}^\infty (1-a Q^{i-1/2} \tilde{Q}^{j-1/2})^{-1}$. This is again a Schur measure, and it is determinantal~\cite{oko01}. Namely, the point process $\{ \lambda_i - i + \frac{1}{2} | i \geq 1\}$ is a determinantal point process: i.e.~if $m \geq 1$ and $k_1, \dots, k_m \in \Z+\frac{1}{2}$ we have:
\begin{equation}
    \P (\{ k_1, \dots, k_m \} \in \{ \lambda_i - i + \tfrac{1}{2} | i \geq 1 \}) = \det_{1 \leq i, j \leq m} K_d(k_i, k_j) 
\end{equation}
where the discrete ($\ell^2$ operator) kernel $K_d$ equals (for $\delta > 0$ very small)
\begin{equation} \label{eq:K_d}
    K_d(k, \ell) = \oint\limits_{|w| = 1 - \delta} \frac{dw}{2 \pi i w} \oint\limits_{|z| = 1 + \delta} \frac{dz} {2 \pi i z} \frac{F_d(s)}{F_d(w)}  \frac{w^{\ell}}{z^{k}} \frac{\sqrt{zw}}{z-w}, \quad F_d(z) = \frac{(\sqrt{a} \tilde{Q}^{1/2}/z; \tilde{Q})_{N}}{(\sqrt{a} Q^{1/2}z; Q)_{M}}
\end{equation}
with $M=N=\infty$ (we nonetheless record the formula for arbitrary $M,N$ for use later). The combinatorial meaning of the integral is coefficient extraction: $K_d(k, \ell)$ is the coefficient of $z^k/w^\ell$ in the generating series above. The condition $|z| > |w|$ makes the formula true analytically as well.

Inclusion-exclusion yields that the distribution of $L \in \{L^{\rm geo}_1, L^{\rm geo}_2, \Lambda_{1,1}, \lambda_1\}$ is the discrete Fredholm determinant of $K_d$: $\P(L \leq l) = \det(1-K_d)_{\ell^2 \{l+1/2,l+3/2,\dots\}}$. 

To finish the proof, we still have to show $\det(1-K_d) \to \det(1-\tilde{K}_{he})$ in the limit $l = \frac{s}{\epsilon} - \frac{\log(\epsilon \eta)}{\epsilon \eta} - \frac{\log(\epsilon \theta)}{\epsilon \theta}$ as $\epsilon \to 0+$. The first step is to show that $\epsilon^{-1} K_d(k, \ell) \to K_{he} (x, y)$ for $(k, \ell) = \frac{(x, y)}{\epsilon}  - \frac{\log(\epsilon \eta)}{\epsilon \eta} - \frac{\log(\epsilon \theta)}{\epsilon \theta}$; the second to show convergence of Fredholm determinants. Both steps require some analytic justification of interchanging of limits, integrals, and sums (the defining series for a Fredholm determinant). Modulo these details which we omit for brevity, to show $\epsilon^{-1} K_d(k, \ell) \to K_{he} (x, y)$ one simply uses the limiting relation
\begin{equation}
    \log (u^c; u)_{\infty} = -\frac{\pi^2}{6} r^{-1} + \left( \frac{1}{2} - c \right) \log r + \frac{1}{2} \log(2 \pi) - \log \Gamma(c) + O(r)
\end{equation}
where $u = e^{-r} \in \{Q, \tilde{Q}\}, r \to 0+, c\notin -\N$, together with a change of variables $(z, w) = (e^{\epsilon \zeta}, e^{\epsilon \omega})$ in~\eqref{eq:K_d}. The contours transform appropriately and the double integral~\eqref{eq:K_d} becomes~\eqref{eq:he_kernel} in the limit $\epsilon \to 0+$.
\end{proof}

\begin{proof}[Proof of Thm.~\ref{thm:cont_lpp}]
    The proof is a $q \to 1-$ limit of the argument above. Let us take $M \leq N < \infty$ and $\alpha \geq 0$ fixed. In the limit 
    \begin{equation}
        q = e^{-\epsilon}, \quad a = e^{- \alpha \epsilon}, \quad \lambda_i = -\epsilon^{-1} \log x_i, \quad \epsilon \to 0+
    \end{equation}
    the process $\Lambda$ from~\eqref{eq:disc_interlacing} converges, in the sense of finite dimensional distributions, to a continuous process ${\bf X}$ of corresponding interlacing vectors with elements in $(0, 1)$ almost surely. Importantly, the slice $\lambda^{(0)} = \lambda$ of $\Lambda$ converges to an ensemble we call $\vec{x}$ of $M$ points $0 < x_1 < \cdots < x_M < 1$ with distribution given by the $q \to 1-$ limit of~\eqref{eq:mbe_disc}; this is the stated distribution from~\eqref{eq:mbe_cont}. We see this using simple limits like: $Q^{l_i} \to x_i^{\eta}, \tilde{Q}^{l_i} \to x_i^{\theta}, a^{l_i} \to x_i^{\alpha}$ and finally $(\tilde{Q}^{l_i+1}; \tilde{Q})_{N-M} \to (1-x^\theta)^{N-M}$.  

    That $L^{\rm pow}_1 = L^{\rm pow}_2 = x_1$ in distribution comes from the fact that, with the setup from the beginning of the proof of Thm.~\ref{thm:disc_lpp_he} (keeping $M,N$ finite), we have $L^{\rm geo}_1 = L^{\rm geo}_2 = \lambda_1$. Moreover the corresponding geometric random variables converge to power random variables: $\exp( -\epsilon \omega_{i, j}^{\rm geo}) \to \omega_{i, j}^{\rm pow}$. Then 
    \begin{equation}
        \max \sum \omega_{i, j}^{\rm geo} = \max \left( - \sum \epsilon^{-1} \log \omega_{i, j}^{\rm pow} \right) = - \epsilon^{-1} \log \min \left( \prod \omega_{i, j}^{\rm pow} \right)
    \end{equation} 
    (with sums/products being over the appropriate sets of directed paths) showing that $\exp(- \epsilon L_i^{\rm geo} ) \to L_i^{\rm pow}, i=1,2$. Together with the fact that $\exp(-\epsilon \lambda_1) \to x_1$ and the discrete finite $M, N$ Greene--Krattenthaler Theorem~\cite{gre74, kra06}, this finishes the proof.
\end{proof}

\begin{proof}[Proof of Thm.~\ref{thm:gap_prob_cont}]
    The ensemble $\vec{x}$ from Theorem~\ref{thm:cont_lpp} is determinantal with kernel $K_c$, as a limit $\epsilon \to 0+$ of the ensemble $q^{l}$ with (recall) $l_i = \lambda_i+M-i,1\leq i \leq M$. The kernel $K_c$ comes from $\epsilon^{-1} K_d(k, \ell) \to K_c(x, y)$ with $K_d$ as in~\eqref{eq:K_d}, with $M \leq N$ finite, with $a = e^{-\alpha \epsilon}$, $(k, \ell) = -\epsilon^{-1} (\log x, \log y)$, and with changing the variables $(z, w) = (e^{\epsilon \zeta}, e^{\epsilon \omega})$ inside the integral to have a finite limit. We have
    \begin{equation}
        K_c(x, y) = \frac{1}{\sqrt{xy}} \int\limits_{-\delta+i \R} \frac{d \omega}{2 \pi i} \int\limits_{\delta+i \R} \frac{d \zeta}{2 \pi i} \frac{F_c(\zeta)}{F_c(\omega)} \frac{x^\zeta}{y^\omega} \frac{1}{\zeta-\omega}, \quad F_c(\zeta) = \frac{(\frac{\alpha}{2 \theta} + \frac{\zeta}{\theta} + \frac{1}{2})_{N} } {(\frac{\alpha}{2 \eta} - \frac{\zeta}{\eta} + \frac{1}{2})_{M}}
    \end{equation}
    where $(a)_n = \prod_{1 \leq i < n} (a+i) = \Gamma(a+n)/\Gamma(a)$ is the Pochhammer symbol. Finally $ M^{-1/\eta} N^{-1/\theta} K_c(x M^{-1/\eta} N^{-1/\theta}, y M^{-1/\eta} N^{-1/\theta}) \to K_{he} (x, y)$ as $M, N \to \infty$ from Stirling's approximation of the Gamma function; the contours remain unchanged in the limit; and further estimates show Fredholm determinants converge to Fredholm determinants proving the result.
\end{proof}

\begin{proof}[Proof of Thm.~\ref{thm:disc_lpp_tw}]
    The argument is similar to the asymptotical part of the proof of Thm.~\ref{thm:disc_lpp_he}, but the details get more complicated. Let us write $R = \epsilon^{-1} \to \infty$. The bulk of the argument is showing that, with $M=N=\infty$ and as $R \to \infty$ we have $R^{1/3} K_d(k, \ell) \to A(x, y)$ with $K_d$ as in~\eqref{eq:K_d} and for $(k, \ell) = c_1 R + (x, y) c_2 R^{1/3}$. Here $A$ is the Airy kernel~\cite{tw94_airy} given by 
    \begin{equation}
        A(x,y) = \int_{-\delta + i \R} \frac{d \omega}{2 \pi i} \int_{\delta + i \R} \frac{d \zeta}{2 \pi i} \frac{\exp(-x \zeta + \zeta^3/3)}{\exp(-y \omega + \omega^3/3)} \frac{1}{\zeta - \omega}
    \end{equation}
    and we recall $F_{\rm TW}(s) = \det(1-A)_{L^2(s, \infty)}$. Some extra estimates then are needed to show the gap probability $\P(L \leq l) = \det(1-K_d)_{\ell^2(l+1/2,\dots)} \to \det (1-A)_{L^2\{s, \infty\}}$ when $l = c_1 R + s c_2 R^{1/3}$ and $R \to \infty$. The constants $c_1, c_2$ are given in footnote~\ref{ft:asymptorics}.

    We begin by taking $0 < b=\sqrt{a} < 1$ fixed. We note the asymptotic estimate $(g u; u)_{\infty} \approx -r^{-1} \Li(g)$ if $g$ is away from $0$ and $1$ and $u=e^{-r}, r \to 0+$ ($\Li$ the dilogarithm). In our case $u \in \{Q, \tilde{Q}\}$ and we can then estimate $F_d(z)/F_d(w)$ in~\eqref{eq:K_d}. It follows that $K_d(k, \ell) \approx \oint \oint e^{R [S(z)-S(w)]} \frac{dzdw}{z-w}$ where $S(z) = \eta^{-1} \Li(bz) - \theta^{-1} \Li(b/z) - (k/R) \log z$ (for $S(w)$, $k \mapsto \ell$). Let both $k,\ell \approx v R$ and moreover take $v = v_c = -\eta^{-1} \log(1-b z_c)-\theta^{-1} \log(1-b/z_c), z_c = \tfrac{b(\theta-\eta)+\sqrt{4\eta\theta+b^2(\theta-\eta)^2}}{2 \theta}$. First note $c_1 = v_c$ by definition. Moreover, in this case, $S' = S'' = 0$ at $z=z_c$ ($'=\frac{d}{dz}$) and the asymptotic contribution of $\oint \oint e^{R [S(z)-S(w)]} \frac{dzdw}{z-w}$ comes from the third derivative $S'''$. We Taylor-expand $S(z)$ around $z = z_c (1 + \zeta R^{-1/3}/c_2)$ and $k = v_c R + x c_2 R^{1/3}$ in powers of $R^{-1/3}$, and likewise for $S(w)$ with $(\omega, \ell, y)$ replacing $(\zeta, k, x)$. The first few terms (for $z$) are
    \begin{equation}
        S(z_c) + \frac{S^{(3)} (z_c)} {6} \frac{\zeta^{3}}{c_2^3} - x \zeta + O(R^{-1/3}) = S(z_c) + \frac{\zeta^{3}}{3} - x \zeta + O(R^{-1/3})
    \end{equation}
    having chosen $c_2$ so that we have the simpler expansion on the right. Note also that the contours become the vertical lines given in the definition of the Airy kernel above. Modulo some extra estimates omitted here we have:
    \begin{equation}
        R^{1/3} K_d(k, \ell) \approx R^{1/3} \oint \oint \frac{e^{R [S(z)-S(w)]}}{z-w} \frac{dzdw}{(2 \pi i)^2}  \approx \frac{1}{(2 \pi i)^2} \int_{C_{\zeta}} \int_{C_{\omega}} \frac{e^{\frac{\zeta^3}{3} - x \zeta} }{e^{\frac{\omega^3}{3} - y \omega}} \frac{d \zeta d \omega}{\zeta - \omega}
    \end{equation}
    (with $(C_\zeta, C_\omega) = (\delta, -\delta) + i \R, \delta > 0$) showing $R^{1/3} K_d(k, \ell) \to A(x, y)$ as desired.
\end{proof}


\printbibliography

\end{document}